\documentclass[letterpaper, 11pt,  reqno]{amsart}

\usepackage[margin=1.1in,marginparwidth=1.5cm, marginparsep=0.5cm]{geometry}

\usepackage{amsmath,amssymb,amscd,amsthm,amsxtra, esint}

\usepackage{mathrsfs} 

\usepackage[implicit=true]{hyperref}

\usepackage{color} 
\usepackage{ulem}
\usepackage[makeroom]{cancel}

\allowdisplaybreaks[2]

\sloppy

\hfuzz  = 0.5cm 


\setlength{\pdfpagewidth}{8.50in}
\setlength{\pdfpageheight}{11.00in}

\definecolor{gr}{rgb}   {0.,   0.69,   0.23 }
\definecolor{bl}{rgb}   {0.,   0.5,   1. }
\definecolor{mg}{rgb}   {0.85,  0.,    0.85}
\definecolor{yl}{rgb}   {0.8,  0.7,   0.}
\definecolor{or}{rgb}  {0.7,0.2,0.2}

\newtheorem{theorem}{Theorem} [section]

\newtheorem{lemma}[theorem]{Lemma}

\newtheorem{remark}[theorem]{Remark}






\newcommand{\noi}{\noindent}
\newcommand{\Z}{\mathbb{Z}}
\newcommand{\R}{\mathbb{R}}

\newcommand{\T}{\mathbb{T}}

\newcommand{\F}{\mathcal{F}}

\newcommand{\al}{\alpha}
\newcommand{\be}{\beta}
\newcommand{\dl}{\delta}

\newcommand{\eps}{\varepsilon}
\newcommand{\kk}{\kappa}
\newcommand{\g}{\gamma}
\newcommand{\G}{\Gamma}
\newcommand{\ld}{\lambda}

\newcommand{\s}{\sigma}

\newcommand{\ft}{\widehat}

\newcommand{\cj}{\overline}
\newcommand{\dx}{\partial_x}
\newcommand{\dt}{\partial_t}
\newcommand{\dd}{\partial}

\newcommand{\too}{\longrightarrow}

\renewcommand{\l}{\ell}

\newcommand{\les}{\lesssim}

\newcommand{\jb}[1]
{\langle #1 \rangle}

\newcommand{\ind}{\mathbf 1}

\newcommand{\M}{\mathcal{M}}

\newcommand{\Hi}{\mathcal{H}}

\newtheorem*{ackno}{Acknowledgements}

\numberwithin{equation}{section}
\numberwithin{theorem}{section}

\DeclareMathOperator{\sgn}{sgn}




\newcommand{\Qdl}{\mathcal{Q}_{\dl}}

\newcommand{\vk}{\tau}

\makeatletter
\@namedef{subjclassname@2020}{%
  \textup{2020} Mathematics Subject Classification}
\makeatother

\newcommand{\Gdl}{\mathcal{G}_{\dl} }
\newcommand{\Tdl}{\mathcal{T}_{\dl} }

\begin{document}
\baselineskip = 14pt


\title[ILW in negative Sobolev spaces]
{Intermediate long wave equation\\in negative Sobolev spaces}

\author[A.~Chapouto, J.~Forlano,  G.~Li, T.~Oh,  and D.~Pilod]
{Andreia Chapouto, Justin Forlano, Guopeng Li, \\Tadahiro Oh, and Didier Pilod}

\address{
Andreia Chapouto\\ School of Mathematics\\
The University of Edinburgh\\
and The Maxwell Institute for the Mathematical Sciences\\
James Clerk Maxwell Building\\
The King's Buildings\\
Peter Guthrie Tait Road\\
Edinburgh\\ 
EH9 3FD\\
United Kingdom}

\email{a.chapouto@ed.ac.uk}

\address{
Justin Forlano\\ School of Mathematics\\
The University of Edinburgh\\
and The Maxwell Institute for the Mathematical Sciences\\
James Clerk Maxwell Building\\
The King's Buildings\\
Peter Guthrie Tait Road\\
Edinburgh\\ 
EH9 3FD\\
United Kingdom}

\email{j.forlano@ed.ac.uk}

\address{
Guopeng Li\\ School of Mathematics\\
The University of Edinburgh\\
and The Maxwell Institute for the Mathematical Sciences\\
James Clerk Maxwell Building\\
The King's Buildings\\
Peter Guthrie Tait Road\\
Edinburgh\\ 
EH9 3FD\\
United Kingdom}

\email{guopeng.li@ed.ac.uk}

\address{
Tadahiro Oh\\ School of Mathematics\\
The University of Edinburgh\\
and The Maxwell Institute for the Mathematical Sciences\\
James Clerk Maxwell Building\\
The King's Buildings\\
Peter Guthrie Tait Road\\
Edinburgh\\ 
EH9 3FD\\
United Kingdom}

\email{hiro.oh@ed.ac.uk}

\address{
Didier Pilod
Department of Mathematics\\
 University of Bergen\\ Postbox 7800\\
  5020 Bergen\\
   Norway}

\email{Didier.Pilod@uib.no}

\subjclass[2020]{35Q35, 37K10, 76B55}

\keywords{intermediate long wave equation; Benjamin-Ono equation; a priori bound; complete integrability;
ill-posedness}

\begin{abstract}
We study the intermediate long wave equation (ILW) in negative Sobolev spaces.
In particular, despite the lack of scaling invariance, 
we identify the regularity $s = -\frac 12$
as the critical regularity for ILW with any depth parameter, 
by establishing the following two results.
(i)~By viewing ILW as a perturbation of the Benjamin-Ono equation (BO)
and exploiting the complete integrability of BO, 
we  establish a global-in-time a priori bound
on the $H^s$-norm of a solution to  ILW for   $ - \frac 12 < s < 0$.
(ii)~By making use of explicit solutions, we  prove that ILW is ill-posed in $H^s$ for  $s < - \frac 12$.
Our results apply to both  the real line case and the periodic case.

\end{abstract}


%
\maketitle

\vspace{-7mm}

\tableofcontents

\vspace{-7mm}

\section{Introduction}

We consider the intermediate long wave equation (ILW)
on $\M = \R$ or $\T = (\R/ \Z)$:
\begin{align}
\begin{cases}
\dt u - \mathcal{G}_{\dl}\dx^2 u =\dx(u ^2) \\
u \vert_{t=0}  =u_0,
\end{cases}
\quad 
(t,x)\in \R\times \mathcal{M} \label{ILW}
\end{align}

\noi
for $0<\dl<\infty$.
The operator $\Gdl$ is given by 
\begin{align}
\mathcal{G}_{\dl}=\Tdl - \dl^{-1} \dx^{-1}, 
\label{G1}
\end{align}

\noi
where $\Tdl$ is the  Fourier multiplier operator with symbol
\begin{align}
\ft{ \Tdl f}(\xi) = -i \coth(\dl \xi) \ft f(\xi), \quad \xi \in \ft{\M}. \label{Tdl}
\end{align}

\noi
Here, $\ft \M$ denotes 
the Pontryagin dual of $\M$, i.e.
$\ft \M = \R$ if $\M = \R$, 
and 
$\ft \M = \Z$ if $\M = \T$.
The ILW equation \eqref{ILW} was
introduced in \cite{joseph, KKD} 
as a model describing the propagation of an internal wave at the interface of a stratified fluid of 
finite depth $\dl$, with further applications in modeling wave phenomena in oceanography and meteorology. 
Furthermore, 
it appears as an ``intermediate'' equation of finite depth $0 < \dl < \infty$ 
between the Benjamin-Ono equation (deep-water limit: $\dl\to\infty$)
and the  KdV equation (shallow-water limit: $\dl\to0$), 
attracting wide attention 
from both the applied and theoretical scientific communities.
Even from the purely analytical point of view, 
 \eqref{ILW} is of great  interest due to its rich structure;
 it is a dispersive equation, admitting soliton solutions.
 Moreover,  it is completely integrable with an infinite number of conservation laws.
See \cite{S19, KS} for an overview of these topics and more on the physical significance of ILW.

Despite  recent popularity of the ILW equation and its deep connection to the well-known 
Benjamin-Ono and KdV  equations, there remain many open questions in  well-posedness of~\eqref{ILW} and 
convergence as $\dl \to 0$ or $\infty$.
In this paper, we focus on the former question;
see  \cite{ABFS, MV, MPV, IS23, CLOP} for the known well-posedness results
for ILW.
See also \cite{ABFS, Gli, LOZ, CLOP, CLOZ}
for results on convergence issues from both deterministic and statistical viewpoints.

It is known (see \cite{MST, KTz2}) that,  
just like the Benjamin-Ono equation (see \eqref{BO} below), 
ILW~\eqref{ILW}
is  quasilinear in the sense that 
a contraction argument 
can not be used 
for constructing solutions,
which makes the  well-posedness question 
rather challenging, especially in a low-regularity setting.\footnote{We point out that, 
under the mean-zero assumption, the periodic BO and ILW
posed on the circle~$\T$ may be semilinear just like the KdV equation, 
at least in a smooth setting.
See \cite[Theorem 1.2]{Moli2}, 
where, for $s\ge 0$,  the solution map for BO \eqref{BO} on~$\T$ was shown to be real-analytic
from the subspace $H^s_0(\T) \subset H^s(\T)$, consisting  of mean-zero functions, 
into itself.
At this point, however, there is no known well-posedness
argument via a contraction argument 
for the mean-zero periodic BO and ILW.}
In ~\cite{IS23}, Ifrim and Saut
proved global well-posedness of ILW \eqref{ILW}
in $L^2(\R)$.
In a recent preprint \cite{CLOP}, the first, third, fourth, and fifth 
authors
provided a unified argument for $L^2$-global
well-posedness of~\eqref{ILW} on both the real line and the circle.
We point out that the basic strategy
in  \cite{IS23, CLOP} is to  view  ILW~\eqref{ILW} as
a perturbation of the Benjamin-Ono equation (BO):
\begin{align}
\dt u - \Hi \dx^2 u = \dx (u^2),
 \label{BO}
\end{align}

\noi
where $\Hi$ denotes the usual Hilbert transform with multiplier\footnote{On $\T$, we set $\sgn(0)=0$.}
$ -i \sgn(\xi)$, $\xi\in\ft{\M}$, 
and to suitably adapt the known well-posedness arguments
for the BO equation \cite{IK, MP, IT1}.
We will elaborate this viewpoint further in the following.

Our main goal in this paper is to study issues related to  well-posedness
of ILW \eqref{ILW}
in negative Sobolev spaces.
It is well known that a scaling symmetry, if it exists, 
provides an important threshold (called a scaling critical regularity)
on well-posedness for a dispersive equation.
For example, 
BO \eqref{BO} on the real line 
is known to be invariant under  the following $\dot H^{-\frac12}$-invariant scaling:
\begin{align}
u_{\ld}(t,x) = \ld^{-1}u(\ld^{-2}t, \ld^{-1}x), \quad \ld > 0.
\label{ILWscale}
\end{align}

\noi
This scaling symmetry induces 
 the scaling critical regularity $s= - \frac 12$ for BO.
While  ILW does not enjoy a scaling symmetry, 
it was remarked 
 in \cite[Remark 4.2]{CLOP}
 that if $u$ is a solution to ILW~\eqref{ILW} on $\R$
 (with the depth parameter $\dl$), 
 then  the rescaled function $u_\ld$ in \eqref{ILWscale}
solves~\eqref{ILW} with the  depth parameter $\ld \dl$. 
Namely, {\it the family of the ILW equations
with depth parameters $0 < \dl < \infty$
is invariant under the scaling \eqref{ILWscale}.}
This observation hints that the regularity $s= -\frac 12$
may be critical for ILW in an appropriate sense.
We show that this is indeed the case
by establishing the following results.
%

\begin{theorem}\label{THM:1}
Let $\M = \R$ or $\T$ and $0<\dl<\infty$.  Then, the following statements hold.
	
\smallskip
	
\noi
{\rm(i)} \textup{(global-in-time a priori bound).} Let $ - \frac 12< s < 0$.  Given $u_0 \in
H^\infty(\M)$, let $u$ be the {\rm(}unique{\rm)} smooth solution to the ILW equation
\eqref{ILW}.
Then, given small $\eps > 0$, 
there exist positive constants
$C_s$ and 
 $A_{\dl, s} \sim
 \dl^{-2}(1+\dl^{-|s|- \frac 12 - \eps})$, 
 independent of $u_0 \in H^\infty(\M)$, 
such that 
\begin{align}
\|u(t)\|_{H^{s}} 
\le C_s^{|s|+ 1}
e^{A_{\dl, s}|t|}
\Big(1+2C_s e^{A_{\dl, s}|t|} \|u_0\|_{H^{s}}\Big)^{\frac{2|s|}{1-2|s|}}
\|u_0\|_{H^{s}}
 \label{aprioribd}
\end{align}

\noi
for any $t \in \R$.
	
	\smallskip
	
	\noi
{\rm(ii)} \textup{(ill-posedness)} Let $s < - \frac 12$. Then, the ILW
equation \eqref{ILW} is ill-posed in $H^s(\M)$. 
Moreover, when $\M = \T$, 
given any $\al \in \R$, 
the ILW
equation \eqref{ILW} is ill-posed in $H^s_\al(\T)$, 
where 
 $H^s_\al(\T)$
denotes the subspace of $H^s(\T)$ consisting of functions
with spatial mean $\al$.

	
\end{theorem}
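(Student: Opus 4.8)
The plan is to prove the two parts by independent mechanisms, reflecting that $s=-\frac12$ is exactly the threshold at which the $H^s$-geometry of the ILW solitons degenerates: for (i) I would exploit the complete integrability of BO, and for (ii) I would use the explicit soliton solutions. For the a priori bound (i), the starting point is that BO \eqref{BO} carries, through its integrable structure, a family of \emph{low-regularity conservation laws} controlling the $H^s$-norm for $-\frac12<s<0$. Following the perturbation-determinant / generating-function approach to the BO conservation laws (in the spirit of Killip--Visan--Zhang and Talbut), I would work with a generating function $\alpha(\kappa;u)$, built from the BO Lax operator and conserved along the BO flow, whose superposition over the spectral parameter $\kappa$ is comparable, up to controllable errors, to $\|u\|_{H^s}^2$ uniformly on $-\frac12<s<0$. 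Writing ILW \eqref{ILW} as a perturbation of BO,
\begin{equation*}
\dt u - \Hi\dx^2 u = \dx(u^2) + (\Gdl-\Hi)\dx^2 u,
\end{equation*}
the functional $\alpha(\kappa;\,\cdot\,)$ is no longer conserved along the ILW flow; I would compute $\frac{d}{dt}\alpha(\kappa;u)$, note that the genuinely BO contribution cancels by conservation, and retain only the contribution of the perturbation $(\Gdl-\Hi)\dx^2$.

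The heart of the matter is then to estimate this perturbative contribution with the sharp dependence on $\dl$. The symbol of $(\Gdl-\Hi)\dx^2$ equals $i\xi^2\big[(\coth(\dl\xi)-\sgn\xi)-(\dl\xi)^{-1}\big]$, and I would split it into the piece coming from $\coth(\dl\xi)-\sgn\xi$, which decays like $e^{-2\dl|\xi|}$ and is therefore strongly smoothing at high frequency, and the transport piece $-\dl^{-1}\dx$, which is a genuine flux and contributes only a lower-order, controllable term to $L^2$-based quantities. This should produce a differential inequality $|\frac{d}{dt}\alpha(\kappa;u)|\les A_{\dl,s}(\cdots)$ with $A_{\dl,s}\sim \dl^{-2}(1+\dl^{-|s|-\frac12-\eps})$; a Gronwall argument followed by a dyadic reconstruction of $\|u\|_{H^s}$ from the family $\{\alpha(\kappa;u)\}_\kappa$, optimizing the high/low-frequency split, would then give \eqref{aprioribd}, with the exponent $\frac{2|s|}{1-2|s|}$ recording the balance that breaks down as $s\to-\frac12$. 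I expect this last step to be the main obstacle: propagating the perturbation through the nonlinear, $\kappa$-dependent conservation laws while tracking the sharp $\dl$-weights and recovering the exponent up to the critical threshold.

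For the ill-posedness (ii) on $\M=\R$, I would use the explicit ILW soliton solutions. Write $Q_c$ for the soliton with amplitude/frequency parameter $c$, traveling at speed $\omega(c)$. In the concentration regime $c\to\infty$ the profiles $Q_c$ shrink in width while keeping a nonzero mass $m=\int Q_c$, so that $\ft{Q_c}(\xi)\to m$ pointwise under a uniform-in-$c$ bound $|\ft{Q_c}(\xi)|\le \|Q_c\|_{L^1}\les 1$; since $2s<-1$, dominated convergence gives $Q_c\to m\,\delta_0$ in $H^s(\R)$, where $\delta_0$ is the Dirac mass at the origin, while $\omega(c)\to\infty$. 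The corresponding exact solutions $Q_c(\,\cdot\,-\omega(c)t)$ have $t$-independent $H^s$-norm but translate off to spatial infinity, hence at any fixed $t\ne0$ they converge weakly to $0$ yet stay bounded away from $0$ in $H^s$; thus they do \emph{not} converge in $H^s$, even though the data do. This contradicts continuity of any solution map at the $H^s$-datum $m\,\delta_0$, yielding ill-posedness.

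On $\T$ I would replace $Q_c$ by the explicit periodic traveling waves, which likewise concentrate to a (periodic) Dirac mass in $H^s$ while their speeds diverge; choosing $c_n\to\infty$ along which $\omega(c_n)t_0\bmod 1$ has two distinct limits produces two data-sequences with the same $H^s$-limit but solutions converging to Dirac masses at different points, again breaking continuity. The prescribed-mean statement then follows from the Galilean-type boost $u(t,x)\mapsto u(t,x-2\al t)+\al$, which sends ILW-solutions to ILW-solutions, shifts the mean by $\al$, and preserves the oscillatory $H^s$-geometry, so the mean-$\al$ case reduces to the mean-zero construction. Here the main obstacle is to verify, uniformly in $\dl$, the two quantitative soliton facts directly from the explicit ILW profiles — the $H^s$-convergence $Q_c\to m\,\delta_0$ and the divergence $\omega(c)\to\infty$ — together with the equidistribution of the phases in the periodic case.
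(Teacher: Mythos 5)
Your proposal follows essentially the same two-pronged strategy as the paper: for (i), a BO low-regularity conservation law perturbed by the ILW--BO difference plus Gronwall; for (ii), concentration of the explicit traveling waves to $-2\pi\dl_0$ combined with escape to infinity (on $\R$) or phase divergence of $e^{-2\pi i ct}$ (on $\T$), and a Galilean boost for the mean-$\al$ case. Two remarks. First, for (i) you opt for the perturbation-determinant/series quantity of Killip--Vi\c{s}an--Zhang and Talbut, whereas the paper deliberately uses the resolvent-based quantity $\be_s(\kk;u)=\int_\kk^\infty \vk^{2s}\jb{\Pi_+u, R(\vk;u)\Pi_+u}_{L^2}\,d\vk$ of Killip--Laurens--Vi\c{s}an precisely because it avoids a series expansion; both routes are viable, but the series version forces you to propagate the $\Qdl$-perturbation through every term of the expansion. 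Second, and this is the one place your write-up is too loose to survive as stated: you say the transport piece $-\dl^{-1}\dx u$ ``contributes only a lower-order, controllable term.'' It must contribute \emph{exactly zero}. Since $\dx$ has no smoothing, its contribution to $\frac{d}{dt}\al(\kk;u)$ would pair $\frac{\dd\al}{\dd u}\in H^{s+1}$ against $\dx u\in H^{s-1}$, which cannot be estimated for $s<0$ and would destroy the Gronwall closure; the argument only works because this term is generated by the mass $\dl^{-1}M(u)$, which Poisson-commutes with the (translation-invariant) conserved quantity, so the contribution vanishes identically. You need to state and use this cancellation, not an estimate. Minor further points: the periodic traveling waves have mean $\mu_c=-2a\dl\to-2\pi\neq 0$, so the boost parameter for the mean-$\al$ statement should be $\mu_c-\al$ rather than $\al$, and with the paper's sign conventions the correct boost is $u(t,x-2\g t)-\g$; also the ``mass'' $\int Q_c$ is not independent of $c$ but only converges to $-2\pi$, which is all the dominated-convergence step requires.
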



It follows from the proof of Theorem~\ref{THM:1}\,(ii)
that 
if the solution map $\Phi:  H^{s}(\M) \to  C([-T,T];H^{s}(\M))$,
sending initial data $u_0$ to solutions $u = \Phi(u_0)$ of ILW \eqref{ILW}, 
extended to $s < -\frac 12$, 
then it would be discontinuous
at $u_0 = - 2\pi \dl_0$ for any $T > 0$, 
where $\dl_0$ denotes the Dirac delta function.
The known global well-posedness of ILW in $L^2(\M)$ \cite{IS23, CLOP}
and the ill-posedness result in $H^s(\M)$ for $s < -\frac 12$ (Theorem \ref{THM:1}\,(ii))
leave the gap $-\frac{1}{2}\le s<0$.
While the  a priori bound in Theorem~\ref{THM:1}\,(i)
indicates that  well-posedness should extend, 
at least, to  the range $-\frac{1}{2}< s<0$, 
the actual well-posedness of ILW in the  range $-\frac{1}{2}\le s<0$ on either geometry is 
completely open.
In view of the positive and negative results in Theorem \ref{THM:1}, 
we propose that $s = -\frac 12$ is the critical regularity 
for the ILW equation \eqref{ILW}, 
which is in particular independent of the depth parameter $\dl$.

Let us briefly discuss the strategy for proving  Theorem \ref{THM:1}.
As for  the ill-posedness claim in Theorem~\ref{THM:1}\,(ii), 
we follow closely 
the strategy in \cite{BL, AH} 
for ill-posedness of BO in $H^{s}(\M)$,  $s<-\frac{1}{2}$; see also \cite{KPV3}. 
Namely, we make use of explicit traveling wave solutions to ILW~\eqref{ILW} 
which approximate, at time $t =0$,  (a constant multiple of) the Dirac delta function as the speed of the wave diverges to infinity. 
On the circle, the BO equation \eqref{BO}
is known to be ill-posed in the critical space $H^{-\frac 12}(\T)$
whose proof 
 heavily relies on the complete integrability
via the use of
the Birkhoff map; see \cite[Section 7]{GKT}. 
It would be of interest to investigate
if a similar ill-posedness result in $H^{-\frac 12}(\T)$ holds
for ILW.

Let us now turn to Theorem \ref{THM:1}\,(i).
While ILW is known to be completely integrable, 
we do not make use of  its integrable structure (which is not well understood)
 to prove Theorem~\ref{THM:1}(i). 
 We instead view ILW \eqref{ILW} as a perturbation of BO \eqref{BO} (just as in \cite{IS23, CLOP})
 and exploit the integrable structure of the BO equation.
Define   $\Qdl$  by 
\begin{align}
\Qdl = (\Tdl-\Hi)\dx, 
\label{Qdl}
\end{align}

\noi
where $\Tdl$ is as in \eqref{Tdl}. 
Then, in view of \eqref{G1}, 
we can write ILW  \eqref{ILW}
as
\begin{align}
\dt u - \Hi \dx^2 u =   \dx (u^2)
- \dl^{-1}\dx u + \Qdl\dx u.
\label{ILWQ}
\end{align}


\noi
As seen in \cite{IS23, CLOP},\footnote{In \cite{IS23, CLOP}, a Galilean transform (see \eqref{G2} below) was applied to remove 
the linear term $- \dl^{-1}\dx u$ in \eqref{ILWQ}.
While we could apply the same Galilean transform to remove this term
and study a renormalized ILW, 
it turns out that such a procedure 
 is not necessary for establishing an a priori bound, 
since the generator for this linear term (i.e.~$\dl^{-1} M(u)$, where $M(u)$ is the mass
defined in~\eqref{be1}) Poisson-commutes with the key quantity $\be_s(\kk; u)$ 
defined in \eqref{be2};
see Lemma \ref{PROP:BOLAX}.}  
the operator  $\Qdl$ 
enjoys a strong smoothing property (see Lemma~\ref{LEM:Qdl}), 
which allows us to view the last term in \eqref{ILWQ}
as a perturbation in a suitable sense.

There have been two successful approaches to the well-posedness
study of BO, exploiting its complete integrability.
In \cite{GKT}, G\'erard, Kappeler, and Topalov
proved sharp global well-posedness of the periodic BO in $H^s(\T)$, $s > -\frac 12$, 
by building a suitable Birkhoff map.
In a recent preprint~\cite{KLV}, 
Killip, Laurens, and Vi\c{s}an
applied the method of commuting flows \cite{KV}
to  the BO equation \eqref{BO}
and 
proved 
sharp\footnote{Except for the endpoint $s = -\frac 12$ on the real line.}
global well-posedness 
in $H^s(\M)$, $s > -\frac 12$, 
on both the real line and the circle.
In the following, we use
 the completely integrable structure for BO as presented in the latter work \cite{KLV}.
In \cite{KLV}, 
for  $\kk\gg 1$,
the authors constructed 
the quantity  $\be_{s}(\kk; u)$ (see~\eqref{be2} below)
which 
 is conserved under the flow of BO
  and is equivalent to the $H^{s}$-norm, provided that  $-\frac12<s<0$;
  see Lemma~\ref{PROP:BOLAX} below.
While this quantity $\be_{s}(\kk; u)$ is 
not conserved under the flow of 
ILW \eqref{ILWQ},   
 the only non-zero contribution 
 to its time derivative 
 comes 
from  the last term 
$\Qdl\dx u$
in \eqref{ILWQ}. 
As this term is linear and enjoys sufficient smoothing,
we can apply  a Gronwall argument to control  the growth of $\be_{s}(\kk;u)$. 
This explains 
 the reason for the time-dependent growth in \eqref{aprioribd}.

We conclude this introduction with several  remarks.

\begin{remark}\rm
(i) 
While we expect that there are time-independent a priori bounds for ILW, 
it seems that one would have to develop an appropriate completely integrable structure of ILW
for this purpose.
We chose not to pursue this direction to exemplify the point that ILW can be thought of 
as a perturbation of BO to obtain the a priori bound.
We  note that (the proof of) 
the a priori bound in Theorem \ref{THM:1}
also  holds for suitable (potentially) non-integrable variants of BO; see, for example, 
Part~(ii) of this remark and also
Remark~\ref{REM:2fluid}.

\smallskip

\noi
(ii) A close look at  the proof of 
Theorem \ref{THM:1}\,(i)
(see \eqref{J2a} in the proof of 
Lemma~\ref{LEM:bds1}) shows that
we only need smoothing\footnote{Namely, 
mapping $L^2(\M)$ into $H^{\frac{3}{2}-s+\eps}(\M)$.} of 
order $\frac{3}{2}-s+\eps$ (for some $\eps>0$)
from the operator $\Qdl $
in proving  the a priori bound \eqref{aprioribd}.
This in particular implies that if we instead consider 
the following variant of the BO equation:
\begin{align}
\dt u - \Hi \dx^2 u =   \dx (u^2)
+ c_1 \dx u + c_2 I \dx u, 
\label{ILWQ2}
\end{align}

\noi
where $c_1,  c_2 \in \R$
and $I$ is a linear operator with smoothing of order  $\frac{3}{2}-s+\eps$ for some $\eps > 0$, 
then a slight modification of the proof of Theorem \ref{THM:1}\,(i)
yields an analogous 
a priori bound on the $H^s$-norm
of a solution to \eqref{ILWQ2} for $- \frac 12 < s < 0$.
We point out that, under a weaker assumption
on $I$ being smoothing of order $1$, 
a slight modification of the argument in 
\cite{CLOP} 
yields global well-posedness of 
\eqref{ILWQ2} in $L^2(\M)$.
Our proof of ill-posedness in Theorem \ref{THM:1}\,(ii)
relies on the explicit solutions to ILW \eqref{ILW}
and thus it does not extend to \eqref{ILWQ2}.

\smallskip

\noi
(iii)
By  using a differencing technique as in \cite{KVZ}, 
we expect that our approach of building a (time-dependent) a priori bound (as in Theorem~\ref{THM:1}\,(i))
will extend to  positive regularities.
See also \cite{MV, CLOP}
for persistency-of-regularity
arguments, controlling the $H^s$-norms of solutions
to~\eqref{ILW}, at least for $0 < s < 1$.

\smallskip

\noi
(iv)
Following \cite{KVZ}, 
a  quantity based on a series expansion of the perturbation determinant
was used in~\cite{Talbut} to establish an a priori bound
on the $H^s$-norm of a solution to  BO for $- \frac 12< s < 0 $.
See also  \cite{OW1, KlausSchippa}.
For our purpose, however, we find the quantity $\be_s(\kk;u)$ in \eqref{be2} more convenient
especially because it does not involve a series expansion.

\smallskip

\noi
(v)
We point out  a similarity 
between 
our argument for establishing the a priori bound (Theorem~\ref{THM:1}\,(i))
and the work of Laurens~\cite{TL1}
 who 
 studied low-regularity well-posedness of the KdV equation with a space-time potential.
The presence of the potential also broke the conservation laws and thus a Gronwall argument was needed to control their growth.

\end{remark}

\begin{remark}\rm \label{REM:2fluid}
In \cite{KKD}, 
the equation for the motion of an internal wave in a finite depth fluid was derived with two depth parameters 
$\dl_j$, $j = 1, 2$, where $\dl_1$ and $\dl_2$ represent the depths of the upper and lower fluids, respectively, 
and is given by 
	\begin{align}
		\dt u - c_1\mathcal{G}_{\dl_1}\dx^2 u-c_2\mathcal{G}_{\dl_2}\dx^2 u =\dx(u ^2), \label{ILW2}
	\end{align}
where $c_1,c_2>0$. 
By applying the  Galilean transform\footnote{We point out that the Galilean transform \eqref{G2}
is needed only for  proving $L^2$-global well-posedness of \eqref{ILW2}, following the argument in \cite{CLOP}, 
and that 
it is not needed to establish an a priori bound in $H^s(\M)$, $-\frac 12 < s < 0$.}
\begin{align}
v(t,x) = u( t, x+ \g t), \quad \g : = \tfrac{c_1}{\dl_1}+\tfrac{c_{2}}{\dl_2}, 
\label{G2}
\end{align}

\noi
we see that $v$ satisfies the renormalized equation:
\begin{align}
\dt v - c_1\mathcal{T}_{\dl_1}\dx^2 v-c_2\mathcal{T}_{\dl_2}\dx^2 v =\dx(v ^2). \label{ILW22}
\end{align}

\noi
Then, we rewrite \eqref{ILW22} as
\begin{align}
\dt v - (c_1+c_2) \Hi \dx^2 v 
= \dx(v ^2)
+ c_1\mathcal{Q}_{\dl_1}\dx^2 v+c_2\mathcal{Q}_{\dl_2}\dx^2 v. 
\label{ILW23}
\end{align}

\noi
By viewing \eqref{ILW23} as 
a perturbation of the following BO equation:
	\begin{align}
\dt v - (c_1+c_2) \Hi \dx^2 v =\dx(v ^2),   
\label{BO2}
\end{align}

\noi
a slight modification of the argument in \cite{CLOP}
yields global well-posedness of \eqref{ILW22} (and of \eqref{ILW2})
in $L^2 (\M)$, and, moreover, the solutions converge to solutions of \eqref{BO2} as 
$\min (\dl_1, \dl_2) \to \infty$.
Similarly, 
a slight modification of the proof of Theorem \ref{THM:1}\,(i)
yields an a priori bound
on the $H^s$-norm of a solution to~\eqref{ILW2}
for $ - \frac 12 < s < 0$.
We point out that~\eqref{ILW2} is not expected to be completely integrable.

\end{remark}

\section{Notations}\label{SEC:preliminary}
We write $A\les B$ to denote that there exists $C>0$ 
such that $A \leq CB$, and $A\ll B$ when $A\leq C B$ with 
sufficiently small $C > 0$.

Next, we go over our convention for Fourier transforms, following~\cite{KLV}.
On the real line, we write 
\begin{align}
\ft f(\xi) = \frac{1}{\sqrt{2\pi}}\int_{\R}  f(x)e^{-i\xi x}dx 
\quad \text{and} \quad f(x) =\frac{1}{\sqrt{2\pi}}\int_{\R}  \ft f(\xi)e^{i\xi x}d\xi,
\label{FT1}
\end{align}

\noi
while on the circle, we set 
\begin{align*}
	\ft f(\xi)= \int_{\T}  f(x)e^{-i\xi x}dx \quad \text{and} \quad f(x)=\sum_{\xi \in 2\pi \Z}\ft f(\xi)e^{i\xi x}.
\end{align*}

\noi
Then,  Plancherel's identity takes the form
\begin{align*}
\|f\|_{L^2(\R)} = \| \ft f\|_{L^2(\R)} \quad \text{and} \quad 
\|f\|_{L^2(\T)}= \| \ft f\|_{L^2(2\pi\Z)} = \bigg( \sum_{\xi \in 2\pi \Z} |\ft f(\xi)|^{2}\bigg)^\frac{1}{2}, 
\end{align*}

\noi
and  $\ft{\dx f}(\xi)=i\xi \ft f(\xi)$. 
Given $s\in \R$ and $\kk>0$, we define the $L^2$-based Sobolev spaces 
$H_{\kk}^{s}(\R)$ and $H_{\kk}^{s}(\T)$ via
\begin{align*}
	\|f\|_{H^{s}_\kk(\R)} =\bigg(\int  \jb{\xi}_{\kk}^{2s} |\ft f(\xi)|^{2} d\xi\bigg)^\frac 12 
\quad \text{and} \quad  \|f\|_{H_{\kk}^{s}(\T)}
=\bigg( \sum_{\xi \in2 \pi \Z} \jb{\xi}_{\kk}^{2s} |\ft f(\xi)|^{2}\bigg)^\frac 12,
\end{align*}

\noi
where $\jb{\xi}_{\kk}=(\kk^2 + |\xi|^2)^\frac 12$;
see also \cite{KT, OW2}.
When $\kk = 1$, 
$H^s_\kk(\M)$ reduces to the standard $L^2$-based Sobolev space $H^s(\M)$.
For $s < 0$, 
the $H^{s}_\kk$-norm of $f$ is decreasing in $\kk$, 
which plays an important role in proving \eqref{besequiv} below;
see \cite[the proof of Lemma 4.3]{KLV}.
Moreover, for $s<0$ and $\kk \ge 1$, we have 
\begin{align}
\| f\|_{H^{s}} \leq \kk^{-s}\|f\|_{H^{s}_{\kk}}. 
\label{HsHsk}
\end{align}

We define the Cauchy-Szeg\H{o} projector $\Pi_+$ by setting
\begin{align*}
\ft{ \Pi_{+}f}(\xi) = \ind_{[0,\infty)}( \xi)\cdot \ft f(\xi).
\end{align*}

\noi
Then, the Hardy space 
 $H^{s}_{+}(\M)$ is defined by 
  $H^{s}_{+}(\M) = \Pi_+ H^{s}(\M)$. 
 Recall that 
 when $\M = \R$,  functions 
in   $H^{s}_{+}(\R)$ 
 are the boundary values (on the real line) of 
holomorphic functions on the upper half-plane,
and that 
 when $\M = \T$,  functions 
in   $H^{s}_{+}(\T)$ 
 are the boundary values (on the circle) of 
holomorphic functions on the unit disc.

 \medskip

Next, we record  the following smoothing property of the operator $\Qdl \dx$.

\begin{lemma}\label{LEM:Qdl}
Let $\M = \R$ or $\T$ and $0 < \dl < \infty$.  
Then, given  $s_1,s_2\in \R$ with $s_1 \le s_2$, 
there exists $C_{s_1 - s_2} > 0$, independent of  $0<\dl<\infty$, such that 
\begin{align}
\| \Qdl \dx f\|_{H^{s_2}(\M)} \le C_{s_1 - s_2} \,\dl^{-2}(1+\dl^{s_1 -s_2}) \|f\|_{H^{s_1}(\M)}.
 \label{Qdl0}
\end{align}

\end{lemma}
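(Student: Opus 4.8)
The plan is to prove the bound \eqref{Qdl0} directly on the Fourier side by computing the symbol of the operator $\Qdl\dx$ and showing it decays rapidly in $\xi$, uniformly in $\dl$ after extracting the stated power of $\dl$. Recall from \eqref{Qdl} that $\Qdl = (\Tdl - \Hi)\dx$, so $\Qdl\dx$ has symbol $m_\dl(\xi) = -(\xi)^2 \cdot \big(\coth(\dl\xi) - \sgn(\xi)\big)\cdot i \cdot i = -\xi^2\big(\coth(\dl\xi)-\sgn(\xi)\big)$, using $\ft{\Tdl f}(\xi) = -i\coth(\dl\xi)\ft f(\xi)$ and $\ft{\Hi f}(\xi) = -i\sgn(\xi)\ft f(\xi)$, together with $\ft{\dx f}(\xi) = i\xi\ft f(\xi)$ applied twice. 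The crucial analytic input is the elementary fact that $\coth(y) - \sgn(y)$ decays exponentially as $|y|\to\infty$: for $y > 0$ one has $\coth(y) - 1 = \frac{2e^{-2y}}{1-e^{-2y}} = \frac{2}{e^{2y}-1}$, and similarly for $y < 0$ by oddness. Hence $|m_\dl(\xi)| = \xi^2\cdot\frac{2}{e^{2\dl|\xi|}-1}$ for $\xi \neq 0$.

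First I would reduce \eqref{Qdl0} to a pointwise bound on the multiplier of the form
\begin{align}
|m_\dl(\xi)| \le C_{s_1-s_2}\,\dl^{-2}\big(1+\dl^{s_1-s_2}\big)\,\frac{\jb{\xi}_1^{s_2}}{\jb{\xi}_1^{s_1}}
= C_{s_1-s_2}\,\dl^{-2}\big(1+\dl^{s_1-s_2}\big)\jb{\xi}^{s_2-s_1},
\label{symbred}
\end{align}
since once \eqref{symbred} holds, Plancherel's identity immediately gives \eqref{Qdl0} by writing $\|\Qdl\dx f\|_{H^{s_2}}^2 = \int \jb{\xi}^{2s_2}|m_\dl(\xi)|^2|\ft f(\xi)|^2\,d\xi$ and using $|m_\dl(\xi)|\jb{\xi}^{s_2} \le (\text{const})\jb{\xi}^{s_1}$. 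Because $s_1 \le s_2$, the exponent $s_2 - s_1 \ge 0$, so \eqref{symbred} asks us to dominate $\xi^2\cdot\frac{2}{e^{2\dl|\xi|}-1}$ by a quantity growing like $\jb{\xi}^{s_2-s_1}$; since the exponential decay beats any polynomial, the left side is in fact bounded, but the point is to track the $\dl$-dependence sharply.

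The heart of the matter is the change of variables $y = \dl\xi$, which converts the symbol into $\dl^{-2}\cdot y^2\cdot\frac{2}{e^{2|y|}-1}$. The function $g(y) := y^2\cdot\frac{2}{e^{2|y|}-1}$ is smooth, bounded on all of $\R$ (it tends to $0$ as $|y|\to\infty$ and to a finite limit as $y\to 0$ since $\frac{2}{e^{2|y|}-1}\sim\frac{1}{|y|}$ there, giving $g(y)\sim |y|$). Thus $\sup_y g(y) =: C_0 < \infty$ is an absolute constant, yielding $|m_\dl(\xi)| \le C_0\dl^{-2}$ uniformly. To recover the factor $\jb{\xi}^{s_2-s_1}$ with the claimed $\dl$-weight, I would split into the regimes $|\xi| \le \dl^{-1}$ and $|\xi| > \dl^{-1}$: in the first regime $\jb{\xi}^{s_2-s_1}$ is comparable to $1$ up to a factor $\dl^{s_1-s_2}$ (which is exactly where the $\dl^{s_1-s_2}$ term in \eqref{Qdl0} enters), while in the second regime the exponential smallness of $\frac{2}{e^{2\dl|\xi|}-1}$ supplies arbitrary polynomial decay, so $g$ can absorb the growing weight $\jb{\xi}^{s_2-s_1}$ with a uniform constant. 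On the circle the argument is identical, with the integral replaced by a sum over $\xi\in 2\pi\Z$ and the $\xi=0$ mode handled separately (there $\coth(\dl\xi)$ is replaced by $\sgn(0)=0$, so the symbol vanishes).

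The main obstacle is the bookkeeping of the $\dl$-dependence to obtain precisely the prefactor $\dl^{-2}(1+\dl^{s_1-s_2})$ rather than a cruder bound; this requires the careful regime split above, being attentive to whether $\dl \le 1$ or $\dl > 1$ when comparing $\jb{\xi}$ (based on the weight $\kk=1$) against the natural scale $\dl^{-1}$ built into $g(\dl\xi)$. Everything else—the exponential decay estimate for $\coth - \sgn$ and the passage from a symbol bound to an operator bound via Plancherel—is routine.
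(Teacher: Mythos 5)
Your proposal is correct and follows essentially the same route as the paper: compute the Fourier multiplier of $\Qdl\dx$, use $\coth(\dl\xi)-\sgn(\xi)=\frac{2}{e^{2\dl|\xi|}-1}\sgn(\xi)$, and absorb the polynomial weight $\jb{\xi}^{s_2-s_1}$ via the elementary bound $y^{\s}\le C_\s(e^{2y}-1)$ (your rescaling $y=\dl\xi$ and regime split is just an explicit version of applying that inequality with $\s=2$ and $\s=2+s_2-s_1$). The only blemish is a harmless sign/factor-of-$i$ slip in your formula for the symbol, which does not affect the estimate since only its modulus enters.
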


\begin{proof}
We proceed as in 
 the
proof of  \cite[Lemma~2.3]{CLOP}; see also   \cite[Lemma 2.2]{IS23}.
With a slight abuse of notation, 
let $\ft \Qdl(\xi)$ denote the multiplier 
for the operator $\Qdl$ in~\eqref{Qdl}.
Then,  we have 
\begin{align*}
\ft{\Qdl}(\xi) = \xi\big(\coth(\dl \xi)-\sgn(\xi)\big) = \frac{2|\xi|}{e^{2|\dl \xi|}-1}
\end{align*} 

\noi
for $\xi \ne 0$.
Then, \eqref{Qdl0}
follows from 
 noting that $x^{\s} \le C_\s( e^{2x} - 1)$ for any $x \ge  0$, 
 provided that $\s \ge 1$.
\end{proof}

\section{Global-in-time a priori bound}\label{SEC:apriori}

In this section, we present the proof of Theorem \ref{THM:1}\,(i)
by viewing  ILW~\eqref{ILWQ}
as a perturbation of the BO equation \eqref{BO}
and exploiting the completely integrable structure of BO.

\subsection{Completely integrable structure of the BO equation}
In this subsection, we recall  from \cite{KLV} the completely integrable structure of 
the BO equation~\eqref{BO} and relevant results. Note that our convention for the signs in \eqref{BO} differ from that in \cite{KLV}. In order to translate between the two, one should use the map $u\mapsto -u$.  
First, recall that BO is a Hamiltonian PDE with the Hamiltonian:
\begin{align}
\text{Hamiltonian:} & \ \ H_{\text{BO}}(u)  = \frac{1}{2} \int_{\M} u \mathcal{H} \dx u dx + \frac{1}{3}\int_{\mathcal{M}}u^3 dx, 
\label{Hamil}
\end{align}

\noi
where the Poisson bracket is given by 
\begin{align}
\{ F,G\}  = \int_\M \frac{\dd F}{\dd u} \dx \frac{\dd G}{\dd  u} dx.
\label{P1}
\end{align}

%
%

\noi
Moreover, BO is completely integrable
with a Lax pair $(L, B) = (L_u, B_u)$ given by 
\begin{align}
L =-i \dd_{x} +\Pi_{+}u
\quad \text{and}\quad 
B =  -i\dd_{x}^{2}+2\dd_{x} \Pi_{+}u - 2(\dx \Pi_{+}u)
\label{Lax1}
\end{align}

\noi
such that $\dt L = [B, L]$, 
when $u$ is a solution to BO.
We also denote by  
$L_0=-i\dd_x$  the Lax operator with the zero potential.
The following lemma summarizes
the basic properties of the Lax operator $L$
(see Part (i) below for the definition of $L= L_u$ with a proper domain)
and its resolvent
from \cite{KLV}; see Propositions~3.2, 4.1, 4.3, and~4.7 
and Lemma~4.11 in \cite{KLV}.

\begin{lemma}\label{PROP:BOLAX}
Let $\M = \R$ or $\T$,  $ -\tfrac 12 < s <  0$,  and  $ \s=\tfrac{1}{2}( \tfrac{1}{2}+s)\in (0, \tfrac 14)$. 
Then, there exists a constant $C_{s}\ge 1$ such that whenever
$u\in H^{s}(\M)$ satisfies
\begin{align}
\kk \geq C_{s} \big(1+ \|u\|_{H^{s}_{\kk}}\big)^{\frac{1}{2\s}} \label{kkcond}
\end{align}

\noi
for some $\kk \ge 1$, 
the following statements hold true.

\smallskip	
	
\begin{itemize}
\item[\textup{(i)}]  
There exists a unique self-adjoint, semi-bounded operator 
$L = L_u$ associated to the quadratic form
\begin{align*}
f\mapsto \jb{f, L_0 f}_{L^2}+\int_{\M} u(x) |f(x)|^2 dx
\end{align*}

\noi
with  $H^{\frac 12}_{+}(\M)$
as the domain for the quadratic form,
where $\jb{\cdot, \cdot}_{L^2}$ denotes 
 the $L^2$-inner product  given by 
$\jb{f,g}_{L^2} = \int_\M \cj{f(x)} g(x)dx$.
The resolvent $R(\kk;u)=(L+\kk)^{-1}$ exists and maps $H^{-\frac 12}_{+}(\M)$ into $H^{\frac 12}_{+}(\M)$.

\smallskip
\item[\textup{(ii)}] 
%
Let 
 $m(\kk;u)=-R(\kk;u)\Pi_{+}u$.
 Then, we have 
\begin{align}
\|m(\kk; u)\|_{H^{s+1}_{\kk}} \les \|u\|_{H^{s}_{\kk}}
\quad\text{and}\quad  \|m(\kk; u)\|_{H^{s}}\les \kk^{-1}\|u\|_{H^{s}}. \label{mbds}
\end{align}
Moreover, if $u\in H^{\infty}(\M)$, then $m\in H^{\infty}(\M)$.

\smallskip

\item[\textup{(iii)}] The quantity $\be(\kk;u)$
defined by 
\begin{align*}
\be(\kk;u) = -\int u(x) m(x;\kk,u)dx=  \jb{\Pi_{+}u, R(\kk; u)\Pi_{+}u}_{L^2} 
\end{align*}
is finite, real-valued, and real-analytic as a function of $u$, and satisfies
\begin{align}
\frac{\dd \be}{\dd u} = -\big(m+\cj{m} +|m|^{2}\big).
 \label{bederiv}
\end{align}

\noi
Moreover, we have
\begin{align}
\{ \be(\kk;u); H_{\textup{BO}}(u)\}
= \{ \be(\kk;u); M(u)\} = 0, 
\label{becoms}
\end{align}

\noi
where $M(u)$ is the mass defined by 
\begin{align}
M(u) = \frac 12 \int_\M u^2 dx.
\label{be1}
\end{align}

\noi
Finally, by setting 
\begin{align}
 \be_{s}(\kk; u) 
 = \int_{\kk}^{\infty} \vk^{2s} \be(\vk;u)d\vk, 
\label{be2}
\end{align}

\noi
we have 
\begin{align}
C_{s}^{-1} \|u\|_{H^{s}_{\kk}}^{2} \leq \be_{s}(\kk; u) 
\leq C_{s} \|u\|_{H^{s}_{\kk}}^{2}. 
\label{besequiv}
\end{align}

\end{itemize}

\end{lemma}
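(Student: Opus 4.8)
The plan is to deduce all three parts directly from the corresponding statements in \cite{KLV} (Propositions~3.2, 4.1, 4.3, 4.7 and Lemma~4.11), after accounting for the sign discrepancy between our BO equation \eqref{BO} and the one in \cite{KLV} via the substitution $u \mapsto -u$; the only genuine task is to track how each estimate transforms under this substitution and to record it in the present normalization. The structural input is that every quantity is built from the resolvent $R(\kk;u) = (L+\kk)^{-1}$ of the Lax operator, so the isospectrality $\dt L = [B,L]$ will be the source of the conservation identities.

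For part~(i), I would start from the quadratic form $f \mapsto \jb{f, L_0 f}_{L^2} + \int_\M u|f|^2\,dx$ on $H^{\frac12}_+(\M)$ and establish form-boundedness of the potential term relative to $L_0 + \kk$. The key estimate bounds $\big|\int_\M u|f|^2\,dx\big|$, via duality and fractional Sobolev embedding (valid precisely because $s>-\frac12$), by a constant multiple of $\|u\|_{H^s_\kk}\|f\|_{H^{\frac12}_\kk}^{2\theta}\|f\|_{L^2_\kk}^{2(1-\theta)}$ for a suitable $\theta\in(0,1)$ determined by $\s = \frac12(\frac12+s)$; the assumption \eqref{kkcond} is exactly what renders this a relatively bounded perturbation with bound strictly less than $1$, so that $L+\kk$ is comparable to $L_0+\kk$ and the representation theorem for semi-bounded forms produces the self-adjoint operator $L=L_u$ together with $R(\kk;u)\colon H^{-\frac12}_+(\M)\to H^{\frac12}_+(\M)$. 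The bounds \eqref{mbds} then follow by writing $m=-R(\kk;u)\Pi_+u$ as the solution of $(L_0+\kk)m = -\Pi_+u - \Pi_+(um)$: the one-derivative gain of $(L_0+\kk)^{-1}$ gives the first estimate after absorbing $\Pi_+(um)$ using the form bound just established, while bookkeeping of the powers of $\kk$ yields the $\kk^{-1}$ decay in the second; propagation of $H^\infty$-regularity is a standard elliptic bootstrap on this equation.

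For part~(iii), finiteness, real-valuedness and real-analyticity follow from the self-adjointness of $L$ and the analytic dependence of $R(\kk;u)$ on $u$. I would obtain \eqref{bederiv} by differentiating $\be=\jb{\Pi_+u,R(\kk;u)\Pi_+u}_{L^2}$ in $u$, using $\partial_u R = -R(\partial_u\Pi_+u)R$ and collecting terms with $m=-R\Pi_+u$: the two contributions linear in the variation combine to $-(m+\cj m)$ and the resolvent-derivative term contributes $-|m|^2$. The two Poisson identities \eqref{becoms} are then structural consequences of integrability: since $\be(\kk;\cdot)$ is built from the resolvent of the isospectral $L$, it is invariant along the BO flow generated by $H_{\textup{BO}}$, and it is invariant along the translation flow $\partial_\tau u = \dx u$ generated by $M$ (as $L_{u(\cdot+\tau)}$ is unitarily equivalent to $L_u$), so both brackets vanish. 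The remaining and most delicate point is the norm equivalence \eqref{besequiv}: here I would isolate the free contribution $\jb{\Pi_+u,(L_0+\kk)^{-1}\Pi_+u}_{L^2}$, integrate it against $\vk^{2s}$, and evaluate the one-dimensional integral $\int_\kk^\infty \frac{\vk^{2s}}{\xi+\vk}\,d\vk \sim \jb{\xi}_\kk^{2s}$ for $\xi\ge 0$, which reproduces $\|u\|_{H^s_\kk}^2$ up to a constant (using that $u$ is real, so $\ft u$ is symmetric); the potential-dependent remainder is then controlled by the resolvent identity and the smallness from \eqref{kkcond}, together with the monotonicity of the $H^s_\kk$-norm in $\kk$ for $s<0$. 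I expect this two-sided equivalence to be the main obstacle, since it simultaneously requires the sharp frequency-by-frequency analysis of the free resolvent and a uniform control of the nonlinear correction.
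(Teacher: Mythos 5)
Your proposal matches the paper's treatment: the paper gives no proof of this lemma at all, simply citing Propositions~3.2, 4.1, 4.3, 4.7 and Lemma~4.11 of \cite{KLV} together with the sign change $u\mapsto -u$, which is exactly your opening step. Your additional sketch of the internal arguments of \cite{KLV} (relative form-boundedness under \eqref{kkcond}, the resolvent identity for $m$, the variational computation giving \eqref{bederiv}, isospectrality and translation invariance for \eqref{becoms}, and the evaluation $\int_\kk^\infty \vk^{2s}(\xi+\vk)^{-1}\,d\vk \sim \jb{\xi}_\kk^{2s}$ for \eqref{besequiv}) is accurate but goes beyond what the paper itself records.
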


Recalling that 
when $s < 0$, 
 the $H^s_\kk$-norm is decreasing in $\kk \ge 1$, 
 we see that if, 
given  $u\in H^{s}(\M)$, 
the condition  \eqref{kkcond} is satisfied
 for $\kk = \kk_0$ for some $\kk_0 \ge 1$, 
 then \eqref{kkcond} holds
 for any $\kk \ge \kk_0$.
In view of \eqref{becoms} and \eqref{be2}, 
we see that  $\be_s(\kk; u)$
 is conserved under the BO dynamics.

\subsection{Proof of Theorem \ref{THM:1}\,(i)}
In this subsection, we present the proof of Theorem \ref{THM:1}\,(i).
Let us state a lemma, where, under some assumption,  we control the growth of
$\be_s(\kk;u)$ via a Gronwall argument.

\begin{lemma}\label{LEM:bds1}
Let $\M = \R$ or $\T$ and $0<\dl<\infty$.
Given $- \frac 12 < s < 0$, let $C_s$ 
and $ \s=\tfrac{1}{2}( \tfrac{1}{2}+s)$
be as in Lemma~\ref{PROP:BOLAX}.
Let $u$ be a smooth global solution to 
ILW~\eqref{ILW}
such that 
\begin{align}
\kk \geq \sup_{ t\in [0,T]} C_{s} \big(1+c_0 \|u(t)\|_{H^{s}}\big)^{\frac{1}{2\s}}
\label{kkcondx}
\end{align}

\noi
for some  $\kk \ge 1$, $T>0$,  and $c_0\geq 1$.
Then, 
there exists $A_{\dl, s} > 0$, independent of 
 $\kk \ge 1$, $T>0$,  and $c_0\geq 1$, 
 such that 
\begin{align}
\be_{s}(\kk;u(t)) \leq e^{A_{\dl, s} t} \be_{s}(\kk; u(0)) \label{besT}
\end{align}

\noi
for any $0 \le t \le T$.

 \end{lemma}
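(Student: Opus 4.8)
The plan is to differentiate the BO-conserved quantity $\be_s(\kk;u(t))$ along the ILW flow, show that the only surviving contribution comes from the smoothing term $\Qdl\dx u$ in \eqref{ILWQ}, and then close with Gronwall's inequality.

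First I would recast the first three terms of \eqref{ILWQ} as a Hamiltonian vector field. With respect to the Poisson structure \eqref{P1}, a direct computation using \eqref{Hamil} and \eqref{be1} gives
\begin{align*}
\dx\frac{\dd}{\dd u}\big(H_{\textup{BO}}(u)-\dl^{-1}M(u)\big) = \Hi\dx^2 u + \dx(u^2) - \dl^{-1}\dx u,
\end{align*}
so that \eqref{ILWQ} reads $\dt u = \dx\frac{\dd}{\dd u}\big(H_{\textup{BO}}-\dl^{-1}M\big) + \Qdl\dx u$. Since $u$ is smooth and, by \eqref{kkcondx} together with $\|u(t)\|_{H^s_\vk}\le\|u(t)\|_{H^s}$, the hypothesis \eqref{kkcond} of Lemma~\ref{PROP:BOLAX} holds for every $\vk\ge\kk$ and every $t\in[0,T]$, I may differentiate \eqref{be2} under the integral sign and use \eqref{bederiv} and \eqref{P1} to obtain
\begin{align*}
\frac{d}{dt}\be_s(\kk;u) = \big\{\be_s, H_{\textup{BO}}-\dl^{-1}M\big\} - \int_\kk^\infty \vk^{2s}\int_\M \big(m+\cj m+|m|^2\big)\,\Qdl\dx u\,dx\,d\vk,
\end{align*}
where $m=m(\vk;u)$. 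By the linearity of the Poisson bracket and \eqref{becoms}, the first term vanishes identically; hence the entire time derivative is driven by the perturbation $\Qdl\dx u$.

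Next I would estimate the remaining double integral piece by piece, the point being that $\Qdl\dx$ smooths (Lemma~\ref{LEM:Qdl}) while $m$ obeys the resolvent bounds \eqref{mbds}. For the linear pieces, duality together with \eqref{mbds} and \eqref{Qdl0} (applied with $s_1=s$, $s_2=-s$) gives $\big|\int_\M m\,\Qdl\dx u\,dx\big|\les \|m\|_{H^s}\|\Qdl\dx u\|_{H^{-s}}\les \vk^{-1}\dl^{-2}(1+\dl^{2s})\|u\|_{H^s}^2$, and the weight $\vk^{2s-1}$ is integrable on $[\kk,\infty)$ precisely because $s<0$, contributing $\tfrac{\kk^{2s}}{2|s|}\dl^{-2}(1+\dl^{2s})\|u\|_{H^s}^2$. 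For the quadratic piece I would use $\big|\int_\M |m|^2\,\Qdl\dx u\,dx\big|\le \|\Qdl\dx u\|_{L^\infty}\|m\|_{L^2}^2$, bounding $\|\Qdl\dx u\|_{L^\infty}\les\|\Qdl\dx u\|_{H^{\frac12+\eps}}\les \dl^{-2}(1+\dl^{s-\frac12-\eps})\|u\|_{H^s}$ via Sobolev embedding and Lemma~\ref{LEM:Qdl}, and $\|m\|_{L^2}\les\vk^{-(s+1)}\|u\|_{H^s_\vk}$ from \eqref{mbds}; the resulting weight $\vk^{-2}$ is again integrable and $\|u\|_{H^s_\vk}\le\|u\|_{H^s_\kk}$, so this piece contributes $\les \kk^{-1}\dl^{-2}(1+\dl^{s-\frac12-\eps})\|u\|_{H^s}\|u\|_{H^s_\kk}^2$.

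To finish I would convert these bounds into a multiple of $\be_s$. By \eqref{HsHsk} one has $\kk^{2s}\|u\|_{H^s}^2\le\|u\|_{H^s_\kk}^2$, which handles the linear contribution; for the quadratic contribution, \eqref{kkcondx} (recalling $\tfrac{1}{2\s}=(\tfrac12+s)^{-1}\ge1$ and $C_s,c_0\ge1$) yields $\kk\ge\|u\|_{H^s}$, so $\kk^{-1}\|u\|_{H^s}\le1$. Combining with the equivalence \eqref{besequiv}, I obtain $\big|\tfrac{d}{dt}\be_s(\kk;u)\big|\le A_{\dl,s}\,\be_s(\kk;u)$ on $[0,T]$ with $A_{\dl,s}\sim\dl^{-2}(1+\dl^{s-\frac12-\eps})$, and Gronwall's inequality then gives \eqref{besT}. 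The main obstacle is the perturbation estimate rather than the algebraic cancellation: one must extract enough decay in $\vk$ from \eqref{mbds} to make the $\vk$-integral converge for every $s\in(-\tfrac12,0)$, and one must absorb the extra factor $\|u\|_{H^s}$ generated by the cubic term $|m|^2$ using the frequency-truncation hypothesis \eqref{kkcondx}; both are exactly what the strong smoothing of $\Qdl\dx$ and the lower bound on $\kk$ are designed to supply.
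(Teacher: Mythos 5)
Your proposal is correct and follows essentially the same route as the paper: differentiate $\be_s(\kk;u)$ along the flow, use \eqref{becoms} to kill the $H_{\textup{BO}}$ and $M$ contributions, estimate the three pieces coming from $m$, $\cj m$, and $|m|^2$ against $\Qdl\dx u$ via \eqref{mbds} and Lemma~\ref{LEM:Qdl}, and close with Gronwall. The only (cosmetic) difference is in the cubic term, where you absorb the extra factor via $\kk^{-1}\|u\|_{H^s}\le 1$ from \eqref{kkcondx} directly, whereas the paper splits into the cases $\|u\|_{H^s_\kk}<1$ and $\|u\|_{H^s_\kk}\ge1$; both yield the same constant $A_{\dl,s}\sim\dl^{-2}(1+\dl^{-|s|-\frac12-\eps})$.
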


In the following, we first present the proof of Theorem \ref{THM:1}\,(i)
by assuming Lemma \ref{LEM:bds1}
whose proof is presented at the end of this section.

\begin{proof}[Proof of Theorem \ref{THM:1}\,(i)]
We only consider the case $t \ge 0$.
Fix $T > 0$, and set 
\begin{align}
c_0=  C_se^{ A_{\dl, s}  T} \geq 1
\label{K1}
\end{align}

\noi
where $C_s> 0$ is as in Lemma \ref{PROP:BOLAX}
and $A_{\dl, s} > 0$ is as in Lemma \ref{LEM:bds1}.
Given $u_0 \in H^{\infty}(\M)$, 
fix $\kk \ge 1$ such that 
\begin{align}
\kk \geq C_{s}(1+2c_0 \|u_0\|_{H^{s}_{\kk}})^{\frac{1}{2\s}}, 
\label{kkcond1}
\end{align}

\noi
where 
$ \s=\tfrac{1}{2}( \tfrac{1}{2}+s)$
is  as in Lemma~\ref{PROP:BOLAX}.
Then, 
it follows from the continuity in time of $u$ 
with values in $H^{s}_{\kk}(\M)$ (recall $c_0 \ge 1$)
that  there exists $0<T_0 \leq T$ such that
\begin{align}
\|u(t)\|_{H^{s}_{\kk}}\leq 2c_0 \|u_0\|_{H^{s}_{\kk}}
\label{K2}
\end{align}

\noi
for any $0 \le t \le T_0$.
It follows from 
\eqref{kkcond1} and \eqref{K2}
that  the condition \eqref{kkcondx} 
in Lemma \ref{LEM:bds1}
is satisfied on $[0,T_0]$.
Hence, by applying 
Lemma \ref{LEM:bds1} with 
\eqref{besequiv} and \eqref{K1}, we have
\begin{align}
\| u(t)\|_{H^{s}_{\kk}}\leq c_0 \|u_0\|_{H^{s}_{\kk}}
 \label{Hskkbd}
\end{align}

\noi
for any $0 \le t \le T_0$.
Therefore, 
by  a continuity argument, we 
conclude that  \eqref{Hskkbd} holds on the entire interval $[0,T]$.

Finally, by choosing 
\begin{align}
\kk = C_{s} (1+2c_0 \|u_0\|_{H^{s}})^{\frac{1}{2\s}},
\label{K3}
\end{align}

\noi
we obtain from 
\eqref{HsHsk}, \eqref{K3}, 
\eqref{Hskkbd}, and 
the monotonicity of the $H^{s}_\kk$-norm  in $\kk$
that 
\begin{align*}
\begin{split}
\|u(t)\|_{H^{s}}
&  \leq \kk^{|s|}\|u(t)\|_{H^{s}_{\kk}} \\
& \le
C_s^{|s|+ 1}
e^{A_{\dl, s}T}
\Big(1+2C_s e^{A_{\dl, s}T} \|u_0\|_{H^{s}}\Big)^{\frac{2|s|}{1-2|s|}}
\|u_0\|_{H^{s}}
\end{split}
\end{align*}

\noi
for  $0\leq t\leq T$, from which 
we conclude 
\eqref{aprioribd} for any $t \ge  0$.
\end{proof}

We conclude this section by presenting the proof of Lemma \ref{LEM:bds1}.

\begin{proof}[Proof of Lemma \ref{LEM:bds1}]

Let $H_{\text{ILW}, \dl}$ be the Hamiltonian 
for ILW \eqref{ILW}
(with the Poisson bracket in~\eqref{P1}):
\begin{align*}
H_{\text{ILW}, \dl}(u) & = \frac{1}{2} \int_{\M} u \Gdl \dx u dx + \frac{1}{3}\int_{\mathcal{M}}u^3 dx. 
\end{align*}

\noi
In view of~\eqref{G1}, \eqref{Qdl}
 \eqref{Hamil}, and \eqref{be1},  
we then have
\begin{align}
H_{\textup{ILW},\dl}(u) = H_{\textup{BO}}(u) - \dl^{-1} M(u) + H_{\Qdl}(u), \quad \text{where } 
\ H_{\Qdl}(u) = \frac{1}{2}\int_{\M} u \Qdl u  dx.
 \label{HILW2}
\end{align}

\noi
Noting that $\Qdl$ is a symmetric operator, we have
\begin{align}
	\frac{\dd H_{\Qdl}}{\dd u} =  \Qdl u. \label{derivHQ}
\end{align}

Fix $\kk \ge 1$ and $T > 0$.
Let $u$ be a smooth global solution to \eqref{ILW}, 
 satisfying \eqref{kkcondx}.
Since $s<0$ and $\kk\geq 1$, we have $\|u(t)\|_{H^{s}_{\kk}}\leq \|u(t)\|_{H^{s}}$.
Thus, the hypothesis \eqref{kkcondx}
implies that  \eqref{kkcond} is satisfied.
In particular,
 $\be_{s}(\kk;u(t))$ is well defined for every $t\in [0,T]$ and all the results of Lemma~\ref{PROP:BOLAX} hold.
Recalling that 
$ \dt  F(u(t)) = \{ F , H_{\textup{ILW},\dl}\}(u(t))$
for a smooth function $F(u)$ (see \cite[Lemma~2.8]{Gre}), 
it follows from 
 \eqref{be2}, \eqref{HILW2}, \eqref{becoms}, 
and \eqref{P1} with 
\eqref{bederiv} and \eqref{derivHQ} that
\begin{align}
\begin{split}
\frac{d}{dt}&  \be_{s}(\kk;u(t)) 
 = \int_{\kk}^{\infty} \vk^{2s}  \frac{d}{dt} \be(\vk; u(t)) d\vk \\
& =  \int_{\kk}^{\infty} \vk^{2s} \{ \be(\vk) , H_{\textup{ILW},\dl}\}(u(t))d\vk \\
& =  \int_{\kk}^{\infty} \vk^{2s} \{ \be(\vk) , H_{\Qdl}\}(u(t))d\vk\\
& = -\int_{\kk}^{\infty} \vk^{2s} \int_{\M} (m(\tau;u(t))+\cj{m}(\tau;u(t))+|m(\tau;u(t))|^2) \Qdl \dx u(t)  dx d\vk \\
& =: I_1 +I_2 + I_3, 
\end{split}
\label{J1}
\end{align}

\noi
where $I_1$, $I_2$, and $I_3$
represent
the contributions from 
$m(\tau;u(t))$, $\cj{m}(\tau;u(t))$, and $|m(\tau;u(t))|^2$, respectively.
From 
Cauchy-Schwarz's inequality (on the Fourier side), \eqref{mbds}, 
Lemma \ref{LEM:Qdl}, \eqref{HsHsk}, and~\eqref{besequiv}, we have 
\begin{align}
\begin{split}
|I_1| + |I_2|& \les  \int_{\kk}^{\infty} \vk^{2s} \|m(\vk; u(t))\|_{H^{s}} \| \Qdl \dx u(t)\|_{H^{-s}} \,   d\vk\\
& \les  \dl^{-2}(1+\dl^{-2|s|}) \|u(t)\|_{H^s}^{2} \int_{\kk}^{\infty}  \vk^{2s-1}   d\vk \\
& \les \dl^{-2}(1+\dl^{-2|s|})\kk^{2|s|} \|u(t)\|_{H^{s}_{\kk}}^{2} \kk^{-2|s|}\\
& \les \dl^{-2}(1+\dl^{-2|s|}) \|u(t)\|_{H^{s}_{\kk}}^{2}\\
& \les \dl^{-2}(1+\dl^{-2|s|})  \be_{s}(\kk;u(t)).
\end{split}
\label{J2}
\end{align}

\noi
Similarly, 
from H\"older's inequality, 
the Sobolev embedding theorem (with small $\eps > 0$), 
Lemma~\ref{LEM:Qdl}, \eqref{HsHsk},
 $\jb{\xi}_\vk^{s+1} \ge \tau^{s+1}$ (recall that $s > - \frac 12$), 
\eqref{mbds}, 
and the monotonicity of the $H^s_\kk$-norm (in $\kk$), 
we have 
\begin{align}
\begin{split}
|I_3| 
& \le  \| \Qdl \dx u(t)\|_{H^{\frac{1}{2}+\eps}} \int_{\kk}^{\infty} \vk^{2s}\|m(\vk; u(t))\|_{L^2}^{2} d\vk \\
& \les \dl^{-2}(1+\dl^{-|s|- \frac 12 - \eps})\kk^{|s|} \|u(t)\|_{H^{s}_{\kk}} \int_{\kk}^{\infty} \vk^{2s-2(s+1)} \|m(\vk; u(t))\|_{H^{s+1}_{\vk}}^{2} \, d\vk \\
& \les \dl^{-2}(1+\dl^{-|s|- \frac 12 - \eps})
\kk^{|s|} \|u(t)\|_{H^{s}_{\kk}} \int_{\kk}^{\infty} \vk^{-2} \|u(t)\|_{H^{s}_{\vk}}^{2} \, d\vk\\
& \les  \dl^{-2}(1+\dl^{-|s|- \frac 12 - \eps})\kk^{|s|}  \|u(t)\|_{H^{s}_{\kk}}^{3} \int_{\kk}^{\infty} \vk^{-2} d\vk \\
& \les \dl^{-2}(1+\dl^{-|s|- \frac 12 - \eps})\frac{ \|u(t)\|_{H^{s}_{\kk}}^{3}}{ \kk^{1+s}}.
\end{split}
\label{J2a}
\end{align}

\noi
By separately considering the cases $\|u(t)\|_{H^{s}_{\kk}}<  1$
and $\|u(t)\|_{H^{s}_{\kk}}\ge  1$
(where, in the latter case,
we use  \eqref{kkcondx} with $2\s \leq 1+s$ which follows from  
 the definition of $\s$ 
in Lemma \ref{PROP:BOLAX}), the fact that $\kk, c_0 \ge 1$, and \eqref{besequiv}, we have 
\begin{align}
\begin{split}
|I_3| 
& \les \dl^{-2}(1+\dl^{-|s|- \frac 12 - \eps})  \|u(t)\|_{H^{s}_{\kk}}^{2} \\
& \les \dl^{-2}(1+\dl^{-|s|- \frac 12 - \eps}) \be_{s}(\kk;u(t)).
\end{split}
\label{J3}
\end{align}

\noi
Hence, from \eqref{J1}, \eqref{J2}, and \eqref{J3}, 
we have 
\begin{align*}
\frac{d}{dt} \be_{s}(\kk;u(t))
 \le C \dl^{-2}(1+\dl^{-|s|- \frac 12 - \eps})
 \be_{s}(\kk;u(t))=:
 A_{\dl, s}\be_{s}(\kk;u(t)), 
\end{align*}

\noi
where $A_{\dl, s}$ is independent of 
 $\kk \ge 1$, $T>0$,  and $c_0\geq 1$.
Then, the desired bound \eqref{besT}
follows from Gronwall's inequality.
\end{proof}

\section{Ill-posedness}

In this section, we 
prove ill-posedness of ILW \eqref{ILW}
in $H^s(\M)$ for $s < -\frac 12$
(Theorem~\ref{THM:1}\,(ii)).
In Subsection \ref{SEC:Rcase}, we discuss the real line case, 
while we treat the periodic case in 
Subsection~\ref{SEC:Tcase}.

\subsection{Ill-posedness on the real line}
\label{SEC:Rcase}

We first recall the following traveling wave
solutions for ILW \eqref{ILW}; see \cite{joseph, AT, Albert}.
Given $c > 0$, 
let 
$a=a(c)\in \big(0, \frac\pi\dl\big)$ be the unique solution of the equation
\begin{align*}
	a\dl \cot(a\dl) =1-c\dl.
\end{align*}

\noi
Then, $u_c$ defined by 
\begin{align}
u_{c}(t,x) = \frac{-a \sin (a\dl)}{\cosh(a(x-ct))+\cos(a\dl)}
\label{ILWtravel}
\end{align}

\noi
satisfies \eqref{ILW}.
Our strategy is to follow  the work \cite{BL} by 
Biagioni and Linares for the BO equation
and to take $c \to \infty$
(which is  equivalent to $\frac{\pi}{a}\to \dl$).
From  the formula \cite[(6) on p.\,30]{Bateman}:
\begin{align*}
\int_{\R} \frac{e^{-i\xi x}}{ \cosh(ax) + \cos(a\dl)}dx 
= \frac{2\pi}{a \sin(a\dl)}\frac{\sinh(\dl \xi)}{\sinh(\frac{\pi \xi}{a})}, 
\end{align*}

\noi
we see that 
\begin{align}
	\ft u_c (0,\xi) = -\sqrt{2\pi} \frac{\sinh(\dl \xi)}{\sinh(\frac{\pi \xi}{a})}
	 \label{FTuc}
\end{align} 

\noi
with the understanding that
\begin{align}
\frac{\sinh(\dl \xi)}{\sinh(\frac{\pi \xi}{a})}\bigg|_{\xi = 0}
=  \lim_{\xi \to 0}\frac{\sinh(\dl \xi)}{\sinh(\frac{\pi \xi}{a})}
= \frac{a\dl}{\pi}.
\label{frac1}
\end{align}

\noi
Observe that $\ft u_c (0,\xi)$ enjoys the following properties:~(i)~it is bounded for $ |\xi|\leq 1$, 
(ii)~since $\dl<\frac{\pi}{a}$, it decays exponentially as $|\xi| \to \infty$, and (iii)
for each fixed $\xi\in \R$, we have 
\begin{align*}
	\ft u_c (0,\xi)  \to -\sqrt{2\pi}
\end{align*} 

\noi
 as $c\to \infty$
 (i.e.~$\frac{\pi}{a}\to \dl$).
 These three properties
with the dominated convergence theorem
  ensure that,\footnote{Recall our convention \eqref{FT1}
  for the Fourier transform.}
  as $c \to \infty$, 
$u_c(0)  \to -2\pi \dl_0$ in $H^{s}(\R)$ for any $s<-\frac{1}{2}$, 
where $\dl_0$ denotes the Dirac delta function on $\R$.
Together with 
\eqref{ILWtravel}, this convergence implies
\begin{align}
\lim_{c\to \infty} \|u_{c}(t)\|_{H^{s}(\R)}=
\lim_{c\to \infty} \|u_{c}(0)\|_{H^{s}(\R)}
= 
 2\pi \|\dl_0\|_{H^{s}(\R)} \label{ILWill1}
\end{align}

\noi
for any  $t\in \R$. On the other hand, for any test function $\psi\in C_{c}^{\infty}(\R)$
and any fixed $t\neq 0$, 
we have 
\begin{align*}
\jb{u_{c}(t),\psi}_{L^2} & = \int_{\R} u_{c}(0,x) \psi(x+ct)dx =  \int_{\R}  \frac{- \sin(a\dl)}{\cosh(x)+\cos(a\dl)} \psi(\tfrac{x}{a}+ct)dx\to 0
\end{align*}
as $c\to \infty$,
since $u_c(0)$ decays exponentially
as $|x| \to \infty$.
In particular, 
for $t\ne 0$, 
 $u_c(t)$  converges to 0 in the distributional sense
as $c \to \infty$, 
which 
implies that $u_c(t)$ does not converge in $H^s(\R)$ in view of \eqref{ILWill1}.
This completes the proof of Theorem \ref{THM:1}\.(ii) in the real line case.

\subsection{Ill-posedness on the circle}\label{SEC:Tcase}

We go over  the following construction of a periodic traveling wave solution for \eqref{ILW} in \cite{Miloh}. 
We start with 
the profile
 $u_c(0)$ in \eqref{ILWtravel} 
 for the traveling wave solution on the real line
and apply the Poisson summation formula 
 which, with our convention for the Fourier transforms, reads as 
\begin{align}
\sum_{n\in \Z} f(x+n) = \sqrt{2\pi}\sum_{\xi \in 2\pi\Z}  \F_{\R} (f)(\xi)e^{i\xi x}, 
\label{Poisson}
\end{align}
where $f:\R \to \R$ and its Fourier transform  $\F_\R( f)$ decay sufficiently rapidly. 
Let $U_c$ be the periodization of $u_c(0)$ in \eqref{ILWtravel}.
Then, from \eqref{Poisson}
and  \eqref{FTuc},  we have 
\begin{align}
U_{c}(x) 
= \sum_{n\in \Z} \frac{- a \sin (a\dl)}{ \cosh(a(x+n))+\cos(a\dl)}
= -2\pi  \sum_{\xi \in 2\pi \Z} \frac{\sinh(\dl \xi)}{\sinh(\frac{\pi \xi}{a})} e^{i\xi x} , \label{phic}
\end{align}
where $a\in \big(0,\frac \pi \dl\big)$.
Here, we used the convention
 \eqref{frac1}.

 We now verify that there exists a choice 
 for both  $c = c(\dl, a)$ and a constant of integration 
 $B = B(\dl, a)$ such that $U_c$ solves 
\begin{align}
-cU_c+\dl^{-1}U_c -\Tdl \dx U_c -U_c^{2} = B,
 \label{traveleqn}
\end{align}

\noi
\noi
where $\Tdl$ is as in \eqref{Tdl}.
Fix $x\in \R$. For $n\in \Z$, define $b_n = b_n(x)$ and $d_n = d_n(x)$ by 
\begin{align}
b_{n}  = \frac{1}{\cosh(a(x+n))+\cos(a\dl)} \quad \text{and} \quad d_n  = b_{n} \sinh(a(x+n)).
\label{B1}
\end{align}

\noi
Note that 
\begin{align}
U_c = - a\sin(a\dl)\sum_{n\in \Z} b_{n}. 
\label{phic2}
\end{align}

\noi
From \eqref{phic} and \eqref{Tdl}, we have
\begin{align}
\Tdl U_c(x) = 2\pi i  \sum_{\xi \in 2\pi \Z} \frac{\cosh(\dl \xi)}{\sinh(\frac{\pi}{a}\xi)} e^{i\xi x} = -\sum_{n\in \Z} \frac{a \sinh(a(x+n))}{\cosh(a(x+n))+\cos(a\dl)}, 
\label{B2}
\end{align}

\noi
where the second equality follows from 
\eqref{Poisson}
and \cite[(7) on p.\,88]{Bateman}.
Thus, from \eqref{B2} with~\eqref{B1}, we have 
\begin{align}
\begin{split}
-\Tdl \dx U_c(x)
& =  a^2 \sum_{n\in \Z} \frac{1+\cos(a\dl) \cosh(a(x+n))}{\big[\cosh(a(x+n)) +\cos(a\dl)\big]^{2}}\\
&  =  \sum_{n\in \Z} a^2\big[1+\cos(a\dl) \cosh(a(x+n))\big] b_{n}^{2}.
\label{B2b}
\end{split}
\end{align}

Next,  we compute $U_c^{2}$. 
By \eqref{phic2}, we have 
\begin{align}
U_c^{2} = \sum_{n\in \Z}  a^{2} \sin^{2}(a\dl) b_{n}^{2} +  a^{2}\sin^{2}(a\dl) \sum_{\substack{n,m\in \Z \\ n\neq m}} b_{n}b_{m}.
\label{B2a}
\end{align}

\noi
In order to compute the second term above, we use the following identity 
(see \cite[(A\,1) on p.\,622]{Miloh}): 
\begin{align}
\begin{split}
2b_{n} b_{m} &  = -\frac{\cos(a\dl)}{\sinh^{2}(\frac{a}{2}(m-n))+\sin^2 (a\dl)}(b_n+b_m)\\
& \quad  +\frac{ \coth(\frac{a}{2}(n-m))}{ \sinh^{2}(\frac{a}{2}(m-n))+\sin^2 (a\dl)}(d_{n}-d_{m})
\end{split}
\label{bnbm}
\end{align}

\noi
for all $n,m\in \Z$, $n\neq m$.
Putting $n=k+\l$ and $m=k-\l$, it follows from \eqref{bnbm} that 
\begin{align}
\begin{split}
2\sum_{\substack{n,m\in \Z \\ n\neq m}} b_{n}b_{m}  
& = -\lim_{N\to \infty} \sum_{\l \in \Z\setminus\{0\}} \frac{ \cos(a\dl)}{\sinh^{2}(\frac{a}{2}\l)+\sin^2(a\dl)} \sum_{k=-N}^{N} ( b_{k+\l}+b_{k-\l}) \\
& \quad + \lim_{N\to \infty} \sum_{\l \in \Z\setminus\{0\}} \frac{\coth(\frac{a}{2}\l)}{\sinh^{2}(\frac{a}{2}\l)+\sin^{2}(a\dl)} \sum_{k=-N}^{N} (d_{k+\l}-d_{k-\l}).
\end{split}
 \label{bnbm2}
\end{align}

\noi
Since $b_{k}>0$ for any $k\in \Z$, the monotone convergence theorem implies 
\begin{align}
\begin{split}
& \lim_{N\to \infty} \sum_{\l \neq 0} \frac{ \cos(a\dl)}{\sinh^{2}(\frac{a}{2}\l)+\sin^2(a\dl)} \sum_{k=-N}^{N} ( b_{k+\l}+b_{k-\l}) \\
& \quad = 4 \bigg( \sum_{\l=1}^{\infty} \frac{ \cos(a\dl)}{\sinh^{2}(\frac{a}{2}\l)+\sin^2(a\dl)} \bigg) \sum_{k\in \Z} b_{k}.
\end{split}
\label{B3}
\end{align}

\noi
As for the second term on 
the right-hand side of \eqref{bnbm2}, 
by noting that, for each $x \in \R$,    $\text{sgn}(k)d_k(x) \to 1$ as $|k|\to \infty$, 
we have, for each fixed $\l \in \Z\setminus\{0\}$ and $x \in \R$, 
\begin{align}
\sum_{k=-N}^{N} (d_{k+\l}(x)-d_{k-\l}(x)) = \sum_{k=N-\l+1}^{N+\l} d_{k}(x) - \sum_{k=-N-\l}^{-N+\l-1}d_{k}(x) \too 4 \l, 
\label{B4}
\end{align}

\noi
as $N\to \infty$. 
Putting 
\eqref{bnbm2}, \eqref{B3}, and \eqref{B4} together, we have 
\begin{align}
\begin{split}
\sum_{\substack{n,m\in \Z \\ n\neq m}} b_{n} b_{m} &= -2 \bigg( \sum_{\l=1}^{\infty} \frac{ \cos(a\dl)}{\sinh^{2}(\frac{a}{2}\l)+\sin^2(a\dl)} \bigg) \sum_{k\in \Z} b_{k}  + 4\sum_{\l=1}^{\infty} \frac{\l\coth(\frac{a}{2}\l)}{\sinh^{2}(\frac{a}{2}\l)+\sin^{2}(a\dl)} , 
\end{split}
\label{B5}
\end{align}

\noi
where we suppressed the $x$-dependence.
See \cite[(18) on p.\,622]{Miloh}. 
Hence, from \eqref{B2a}, \eqref{B5}, and~\eqref{phic2}, we obtain
\begin{align}
U_c^{2} =  \sum_{n\in \Z}  a^{2} \sin^{2}(a\dl) b_{n}^{2} +V U_c + D, 
\label{B6}
\end{align}

\noi
where $V = V(\dl, a)$
and $D = D(\dl, a)$ are given by 
\begin{align}
\begin{split}
 V& =\sum_{\l=1}^{\infty} \frac{ a\sin(2a\dl)}{\sinh^{2}(\frac{a}{2}\l)+\sin^2(a\dl)}, \\
 D & = 4 a^2 \sin^{2}(a\dl)\sum_{\l=1}^{\infty} \frac{\l\coth(\frac{a}{2}\l)}{\sinh^{2}(\frac{a}{2}\l)+\sin^{2}(a\dl)}.
 \end{split}
\label{B6a} 
\end{align}

By substituting 
\eqref{B2b}
and  
\eqref{B6}
into 
 \eqref{traveleqn}, we obtain
\begin{align*}
(-c+\dl^{-1}-V)U_c + \sum_{n\in \Z} a^2\big[ 1+\cos(a\dl)\cosh(a(x+n))-\sin^{2}(a\dl)  \big]b_n^2 =B+D.
\end{align*}

\noi
Using \eqref{phic2} with \eqref{B1}, this becomes
\begin{align}
\begin{split}
& \sum_{n\in\Z} ab_{n}^{2} \big[ a+ a \cos(a\dl)\cosh(a(x+n))
- a \sin^2(a\dl)\\
&\quad -(-c+\dl^{-1}-V)\sin(a\dl)( \cosh(a(x+n))+\cos(a\dl))\big]  = B+D.
\end{split}
\label{B7}
\end{align}

\noi
Noting that the right-hand side of \eqref{B7}
is independent of $x \in \T$ (but still depends on $\dl$ and $a$), 
we now impose the following three conditions:
\begin{align}
\begin{split}
a \cos(a\dl) &= (-c+\dl^{-1}-V)\sin(a\dl),  \\
a- a \sin^{2}(a\dl)& =(-c+\dl^{-1}-V)\sin(a\dl)\cos(a\dl), \\
B& =-D.
\end{split}
\label{cond1}
\end{align}

\noi
Note that the last condition in \eqref{cond1}
should be interpreted as a definition of $B = B(\dl, a)$
in terms of $D = D(\dl, a)$ in \eqref{B6a}.
In view of the first condition in \eqref{cond1}, we choose $c$ such that 
\begin{align}
-c+\dl^{-1}-V=a\cot(a\dl).
\label{B8}
\end{align}

\noi
such that the first condition in \eqref{cond1} is satisfied.
It is easy to check that with this choice of $c$, 
the second condition in \eqref{cond1} is also satisfied.
From \eqref{B6a} and \eqref{B8}, we have
\begin{align*}
c= \dl^{-1} -a\cot(a\dl) -\sum_{\l=1}^{\infty} \frac{a\sin(2a\dl)}{\sinh^{2}(\frac{a}{2}\l)+\sin^{2}(a\dl)}, 
\end{align*}

\noi
which shows that $c\to \infty$, 
as $a\to \frac{\pi}{\dl}$.

Having constructed the periodic traveling wave 
$u_c (t, x) := U_c(x - ct)$ (with $c = c(\dl, a)$) as above, 
we can proceed as  in Subsection \ref{SEC:Rcase}
to prove ill-posedness of ILW \eqref{ILW} on the circle. 
In view of~\eqref{phic}, 
by arguing as in Subsection \ref{SEC:Rcase}, 
we see that, as $c\to \infty$
(i.e.~$a\to \frac{\pi}{\dl}$), 
 $u_c|_{t = 0} = U_c$ converges to
 $-2\pi\dl_0$
  in $H^{s}(\T)$
  for $s<-\frac 12$, 
  where $\dl_0$ is 
  the Dirac delta function on $\T$. 
 On the other hand, we have  
  \begin{align*}
  \ft u_c(t, 2\pi)
  =  
  \int_{\T}  U_c(x-ct)e^{-2\pi i x}dx
= -2\pi e^{-2\pi i ct} \frac{\sinh(2\pi \dl)}{\sinh( 2\pi\frac{\pi}{a})}.
\end{align*}
As $a\to \frac{\pi}{\dl}$ (and thus $c \to \infty$), 
we have 
$ \frac{\sinh(2\pi \dl)}{\sinh( 2\pi\frac{\pi}{a})}\to 1$ but the  exponential $e^{-2\pi i ct} $ diverges
for $t \ne 0$.
Hence, for $t \ne 0$, 
$u_c(t)$ does not converge in the distributional sense
(and in particular in $H^s(\T)$).
This proves ill-posedness of ILW \eqref{ILW}
in $H^s(\T)$ for $s < -\frac 12$.

Next, let us briefly discuss ill-posedness in 
$H^s_\al(\T)$ for given $\al \in \R$.
Given $c > 0$, 
let $\mu_c$ denote the spatial mean of $U_c$ in \eqref{phic}.
In view of  \eqref{frac1}, we have
\begin{align}
\mu_c = -2a \dl \too -2\pi = \text{the spatial mean of $-2\pi \dl_0$}, 
\label{ave1}
\end{align}

\noi
as $c\to \infty$ (and hence $a\dl \to \pi$).
Given $\g \in \R$, define a Galilean transform $\G_\g$ by 
\begin{align}
 \G_{\g}(u) (t, x) = u (t, x- 2\g t) - \g.
\label{ave2}
\end{align}

\noi
Note that if $u$ is a solution to \eqref{ILW}, 
then so is $\G_\g(u)$ for any $\g \in \R$.

Fix $\al \in \R$.
Given $c > 0$, let $u_c (t, x) = U_c(x - ct)$
be the traveling wave solution constructed above.
Then, 
by setting 
$v_{c, \al} = \G_{\mu_c - \al}(u_c)$, 
it follows from the discussion above with \eqref{ave1}
and~\eqref{ave2}
that (i)~$v_{c, \al}(t) \in H^s_\al(\T)$ for any $t \in \R$, 
(ii)~$v_{c, \al}(0)$ converges to 
 $-2\pi\dl_0 + (2\pi + \al)$
  in $H^{s}_\al(\T)$
  for $s<-\frac 12$, 
and (iii) we have 
  \begin{align*}
\ft v_{c, \al}(t, 2\pi) 
 = -2\pi e^{-2\pi i (c+2\mu_c - \al)t} \frac{\sinh(2\pi \dl)}{\sinh( 2\pi\frac{\pi}{a})}
\end{align*}

\noi
which is divergent as $c\to \infty$
for any $t \ne 0$. 
This proves ill-posedness of 
 ILW \eqref{ILW}
in $H^s_\al(\T)$ for $s < -\frac 12$ and $\al \in \R$.
This concludes the proof of 
 Theorem \ref{THM:1}\,(ii)
on the circle.

\begin{ackno}\rm 
	This material is based upon work supported by the Swedish
	Research Council under grant no.~2016-06596
	while 
the first, fourth, and fifth authors were in residence at 
Institut  Mittag-Leffler in Djursholm, Sweden
during the 
program ``Order and Randomness in Partial Differential Equations''
in Fall, 2023.
A.C., J.F., G.L., and T.O.~were supported by the European Research Council
(grant no. 864138 ``SingStochDispDyn'').
G.L. was also supported by the EPSRC New Investigator Award (grant no.~EP/S033157/1).
D.P. was supported by a Trond Mohn Foundation grant.
\end{ackno}

\end{document}